
\documentclass[11pt]{article}

\usepackage{amsthm,amsmath,amssymb,arydshln}

\newtheorem{theorem}{Theorem}
\newtheorem{lemma}[theorem]{Lemma}

\newtheorem{corollary}[theorem]{Corollary}

\renewcommand{\geq}{\geqslant}
\renewcommand{\leq}{\leqslant}
\renewcommand{\ge}{\geqslant}
\renewcommand{\le}{\leqslant}

\def\eref#1{$(\ref{#1})$}

\def\lref#1{Lemma~$\ref{#1}$}
\def\tref#1{Theorem~$\ref{#1}$}
\def\cref#1{Corollary~$\ref{#1}$}

\title{Overlapping latin subsquares and full products}

\author{
Joshua M. Browning\\
\small School of Mathematical Sciences\\[-0.75ex]
\small Monash University\\[-0.75ex]
\small Vic 3800, Australia\\
\small{\tt joshua.browning@sci.monash.edu.au}\\
\\
Petr Vojt\v{e}chovsk\'y\\
\thanks{Research supported by Enhanced Sabbatical grant of the University of Denver.}
\small Department of Mathematics\\[-0.75ex]
\small University of Denver\\[-0.75ex]
\small Denver, Colorado 80208, U.S.A.\\
\small{\tt petr@math.du.edu}\\
\\
Ian M.\ Wanless\\
\thanks{Research supported by ARC grants DP0662946 and DP1093320.}
\small School of Mathematical Sciences\\[-0.75ex]
\small Monash University\\[-0.75ex]
\small Vic 3800, Australia\\
\small{\tt ian.wanless@sci.monash.edu.au}
}

\date{}

\begin{document}

\maketitle

\begin{abstract}
We derive necessary and sufficient conditions for there to exist a latin square
of order $n$ containing two subsquares of order $a$ and $b$ that
intersect in a subsquare of order $c$. We also solve the case
of two disjoint subsquares. We use these results to show that:
\begin{itemize}
\item[(a)] A latin square of order $n$ cannot have more than $\frac nm{n
    \choose h}/{m\choose h}$ subsquares of order $m$, where
    $h=\lceil(m+1)/2\rceil$. Indeed, the number of subsquares of order $m$
    is bounded by a polynomial of degree at most $\sqrt{2m}+2$ in $n$.

\item[(b)] For all $n\ge5$ there exists a loop of order $n$ in which every
    element can be obtained as a product of all $n$ elements in some order
    and with some bracketing.
\end{itemize}
\end{abstract}

\section{Overlapping latin subsquares}

A $k\times n$ \emph{latin rectangle\/} is a $k\times n$ matrix
containing $n$ different symbols, with each symbol occurring exactly
once in each row and at most once in each column. If $k=n$ the latin
rectangle is a \emph{latin square\/}.  A \emph{subsquare} in a latin
square $L$ is a submatrix of $L$ that is a latin square in its own
right. The cells in a subsquare are not required to be contiguous.




It is well-known that if two subsquares of a latin square intersect
then their intersection is itself a subsquare. Also, a
subsquare of a latin square is either the whole square or it has at
most half the order of the whole square. Another fact that we will
frequently use is the following result due to Ryser~\cite{ryser}:

\begin{theorem}\label{t:ryser}
Suppose that $R$ is an $r\times s$ matrix with symbols from
$\{1,\dots,n\}$ such that no symbol occurs more than once in any row
or column. For $1\le i\le n$, let $\Gamma(i)$ be the number of occurrences
of $i$ in $R$. Then $R$ can be embedded into a latin square of order
$n$ if and only if $\Gamma(i)\ge r+s-n$ for every $i\in\{1,\dots,n\}$.
\end{theorem}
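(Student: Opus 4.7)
My plan is to split along necessity and sufficiency, handle the former by a counting argument inside the embedded latin square, and reduce the latter to two successive latin rectangle completions.

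For necessity, I would suppose that $R$ occupies the top-left $r\times s$ block of a latin square $L$ of order $n$; this partitions $L$ into four rectangular blocks. For each symbol $i$ I would count the $n$ occurrences of $i$ in $L$: by hypothesis $\Gamma(i)$ lie in $R$; row-latinity in the first $r$ rows forces $r-\Gamma(i)$ copies into the top-right block; column-latinity in the first $s$ columns forces $s-\Gamma(i)$ copies into the bottom-left block; so the bottom-right $(n-r)\times(n-s)$ block must contain $i$ exactly $n-r-s+\Gamma(i)$ times. Non-negativity of this count is the desired inequality.

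For sufficiency, I would first extend $R$ to an $r\times n$ latin rectangle $R'$ by adding $n-s$ columns, then extend $R'$ to a latin square of order $n$ by adding $n-r$ rows; the second step is the classical rectangle-completion theorem of M.~Hall, which I would quote. For the first step I would form a bipartite graph $H$ whose parts are the rows of $R$ and the symbols $\{1,\dots,n\}$, joining row $i$ to symbol $j$ whenever $j$ is absent from row $i$ of $R$. Each row has degree $n-s$, and Ryser's hypothesis $\Gamma(j)\ge r+s-n$ rearranges to give every symbol degree $r-\Gamma(j)\le n-s$, so $H$ has maximum degree $\Delta=n-s$. By K\"onig's edge-colouring theorem the edges decompose into $n-s$ matchings $M_1,\dots,M_{n-s}$, and since every row has degree exactly $\Delta$ each $M_k$ saturates all rows. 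Setting cell $(i,s+k)$ to be the $M_k$-partner of row $i$ then fills the new columns so that each row becomes a permutation of $\{1,\dots,n\}$ and each new column has distinct entries, producing the desired $r\times n$ latin rectangle.

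The main obstacle in devising this plan is noticing that Ryser's inequality is precisely the degree bound that lets K\"onig apply to $H$; once that observation is in hand, both stages become routine matching-theoretic constructions. An alternative plan, which I would fall back on if the above ran into trouble, is to grow $R$ one column at a time via a defect form of Hall's theorem, arranging at each step that every ``critical'' symbol (one with $\Gamma(j)=r+s-n$) is forced into the new column; that version requires inductive bookkeeping and a delicate Hall-condition verification, whereas the edge-colouring route handles all $n-s$ additions simultaneously.
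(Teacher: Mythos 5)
The paper does not actually prove this statement: it is quoted as a known theorem of Ryser, with the citation \cite{ryser} standing in for a proof, so there is no in-paper argument to measure yours against. Your proof is correct. The necessity count is the standard four-block tally and is airtight. In the sufficiency direction, the key observation is exactly the one you flag: the hypothesis $\Gamma(j)\ge r+s-n$ is precisely what caps every symbol's degree in your bipartite graph $H$ at $n-s$, matching the (exact) degree $n-s$ of every row vertex; since in a proper $\Delta$-edge-colouring every vertex of degree $\Delta$ meets all $\Delta$ colours, each colour class does saturate every row, so the $n-s$ matchings legitimately fill the new columns, and M.~Hall's completion theorem for $r\times n$ latin rectangles finishes the job. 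For comparison, Ryser's original 1951 argument is essentially your fallback plan: columns are adjoined one at a time via P.~Hall's theorem on systems of distinct representatives, with the bookkeeping requirement that every symbol whose (updated) deficiency is tight must be forced into the new column. The edge-colouring route you chose dispatches all $n-s$ columns at once and avoids that induction, at the mild cost of invoking K\"onig's edge-colouring theorem rather than the marriage theorem alone; both are standard and correct.
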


The goal of this first section is to find conditions under which a latin
square may have subsquares of two specified orders. We begin by treating
the case when the two subsquares overlap. The simplest way for this to
happen is for one subsquare to contain the other (for this to be possible
it is necessary and sufficient that the larger subsquare is at least
twice the order of the smaller one). A more interesting case is when
the subsquares intersect, but neither is inside the other:

\begin{theorem}\label{t:overlapsubsq}
Suppose $0<c<a\le b<n$ are integers. In order for there to exist a latin square
of order $n$ containing two subsquares of order $a$ and $b$ whose intersection 
is a subsquare of order $c$, it is necessary and sufficient that
\begin{equation}\label{e:easycond}
n-2b\ge a-2c\ge0
\end{equation}
and
\begin{equation}\label{e:hardcond}
(n-2a)(n-2b)\ge c^2-(n-2a-2b+3c)^2.
\end{equation}
\end{theorem}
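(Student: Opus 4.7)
My plan is to analyse $L$ via the natural $4\times4\times4$ partition induced by the two overlapping subsquares; necessity comes from non-negativity of the resulting cell counts, and sufficiency from realising the same counts as an actual latin square.

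For necessity, partition the rows of $L$ into $R_c$, $R_A$, $R_B$, $R_0$ of sizes $c,\alpha,\beta,\gamma$, where $\alpha=a-c$, $\beta=b-c$, $\gamma=n-a-b+c$ and $R_c$, $R_A$, $R_B$, $R_0$ consist of the rows in $A\cap B$, $A\setminus B$, $B\setminus A$ and neither subsquare respectively; partition the columns into $C_c$, $C_A$, $C_B$, $C_0$ and the symbols into $S_c$, $S_A$, $S_B$, $S_0$ analogously. For $(i,j,k)\in\{c,A,B,0\}^3$ let $x_{ijk}$ be the number of cells of $L$ in row-class $i$, column-class $j$ bearing a symbol of class $k$. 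The subsquare hypothesis on $A$ forces $x_{ijk}=0$ whenever $i,j\in\{c,A\}$ and $k\in\{B,0\}$, and, since each $S_c$- or $S_A$-symbol already appears $a$ times inside $A$, also whenever $i\in\{c,A\}$, $j\in\{B,0\}$ and $k\in\{c,A\}$; symmetric statements come from $B$ and from $A\cap B$. The row-, column- and symbol-sum constraints $\sum_k x_{ijk}=|R_i||C_j|$, $\sum_j x_{ijk}=|R_i||S_k|$, $\sum_i x_{ijk}=|C_j||S_k|$ then determine every remaining $x_{ijk}$ uniquely in terms of $n,a,b,c$. A routine computation reveals that the only non-trivial non-negativity inequalities are $a\ge 2c$, $\gamma\ge\beta$ and
\[
x_{0,0,0}=\gamma^{2}-\gamma(c+\alpha+\beta)+2\alpha\beta\ge 0;
\]
using $n-2a=\gamma+\beta-\alpha-c$, $n-2b=\gamma+\alpha-\beta-c$ and $n-2a-2b+3c=\gamma-\alpha-\beta$, the first two collapse to \eref{e:easycond} and the third to \eref{e:hardcond}, giving necessity.

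For sufficiency, given $n,a,b,c$ satisfying both conditions, I would realise the computed outline as an honest latin square in stages. First choose any latin square $A$ of order $a$ with a distinguished subsquare of order $c$ on $R_c\times C_c$ (possible as $a\ge 2c$), then a latin square $B$ of order $b$ sharing this same subsquare on $R_c\times C_c$; the bounds $a,b\ge 2c$ make both constructions standard. Next fill the forced off-diagonal regions $R_A\times C_B$, $R_B\times C_A$, $R_c\times C_0$ and $R_0\times C_c$ with symbols of $S_0$ only; each is a small latin rectangle whose realisability follows from $\gamma\ge\max(\alpha,\beta,c)$, which is implied by \eref{e:easycond}. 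It then remains to extend the resulting partial latin square to all of $L$, and I would do this by applying \tref{t:ryser} to a suitable sub-rectangle containing the as-yet-unfilled cells; the quantity $x_{0,0,0}$ being non-negative, i.e.\ condition \eref{e:hardcond}, is precisely what ensures the Ryser bound $\Gamma(i)\ge r+s-n$ at this final step.

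The main obstacle is this last step. The intermediate latin rectangles chosen earlier must be coordinated so that Ryser's condition is satisfied at each extension, and verifying that the only arithmetic obstruction is \eref{e:hardcond} requires some bookkeeping. In effect one must argue that whenever the $x_{ijk}$ computed above are all non-negative integers, the outline can be refined to an actual latin square; this is an outline-type strengthening of \tref{t:ryser} tailored to the present four-class setting.
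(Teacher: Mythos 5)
Your necessity argument is correct and, once unwound, yields exactly the paper's inequality: the forced values of the $x_{ijk}$ are as you claim (in particular $x_{0,0,0}=\gamma^2-\gamma(c+\alpha+\beta)+2\alpha\beta$, and $2x_{0,0,0}$ is precisely the difference of the two sides of \eref{e:hardcond}), while the paper gets the same condition by applying the necessity half of \tref{t:ryser} to the $(a+b-c)\times(a+b-c)$ rectangle $A\cup B\cup X\cup Y$, where $X=R_A\times C_B$ and $Y=R_B\times C_A$ are the only cells of that rectangle carrying symbols of $S_0$. So that half is fine, if more heavily bookkept than necessary.

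The sufficiency half has a genuine gap, which you partly acknowledge. Two concrete problems. First, your staging fills $R_c\times C_0$ and $R_0\times C_c$ before invoking \tref{t:ryser}; the filled cells then form a cross-shaped region rather than an $r\times s$ rectangle, and Ryser's theorem does not apply to such a region (nor is there an obvious ``suitable sub-rectangle containing the as-yet-unfilled cells''). Second, and more importantly, the crux of the whole sufficiency direction is exactly the step you defer: one must fill $X\cup Y$ with symbols of $S_0$ so that (i) no symbol repeats within a row or column and (ii) every symbol of $S_0$ occurs at least $2(a+b-c)-n$ times in $X\cup Y$. It is (ii), not the non-negativity of the various region sizes, that makes Ryser's hypothesis hold, and it is precisely here that \eref{e:hardcond} is used; asserting that ``an outline-type strengthening of Ryser'' would finish the job names the missing theorem rather than proving it. The paper closes this gap constructively: it fills only $A\cup B\cup X\cup Y$, orders the cells of $X\cup Y$ by alternating diagonals of $X$ with diagonals of $Y$ so that any $2(a-c)$ consecutive cells lie in distinct rows and columns, and then assigns the symbols of $S_0$ in consecutive blocks of size $\lfloor 2(a-c)(b-c)/|S_0|\rfloor$ or $\lceil 2(a-c)(b-c)/|S_0|\rceil$ along this order. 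Since $|S_0|\ge b-c$ the ceiling is at most $2(a-c)$, so the latin property is preserved, and \eref{e:hardcond} guarantees the floor is at least $2(a+b-c)-n$, so Ryser's condition holds and the rectangle extends to $L$. (Alternatively one could appeal to Hilton's theorem that every outline latin square is an amalgamation of a latin square, but that is a substantial external result which you would need to state and cite explicitly, not a routine extension of \tref{t:ryser}.)
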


\begin{proof}
To prove necessity, assume that $L$ is a latin square of the desired type. By
permuting rows and columns if necessary, we may assume that $L$ has the form
\begin{equation}\label{e:formofL}
\begin{tabular}{|c|c|c;{1pt/1.5pt}c|}
\hline
\multicolumn{2}{|l|}{$\!A\!$}&$X$&\\
\cline{2-3}
&$\!C\!$\vrule height 12pt width 0pt &\multicolumn{1}{c|}{}&\\
\cline{1-2}
$\!Y\!$&\multicolumn{2}{r|}{$B$\vrule height 16pt depth 6pt width 0pt}
&\vrule height 0pt depth 0pt width 15pt\\
\cdashline{1-1}[1pt/1.5pt]\cline{2-3}
\multicolumn{4}{|c|}{\vrule height 20pt width 0pt}\\
\hline
\end{tabular}
\end{equation}
where $A$ is a subsquare of order $a$, $B$ is a subsquare of order $b$ and they
intersect in $C$, a subsquare of order $c$.


Let $S$ be the set of symbols of $L$ that do not occur in $A$ or $B$. Note 
that $|S|=n-(a+b-c)$. The regions $X$ and $Y$ are $(a-c)\times(b-c)$ and
$(b-c)\times(a-c)$ submatrices, respectively, and they contain only symbols
from $S$. To have enough symbols to fill the first row of $X$ we must have
$|S|\ge b-c$, which gives the first inequality in \eref{e:easycond}. The
inequality $a\ge2c$ is immediate, given that $C$ is a subsquare of $A$ and
$c<a$.

To prove \eref{e:hardcond} we apply Ryser's condition to $R=A\cup
B\cup X\cup Y$ and conclude that each symbol in $S$ must occur at least
$2(a+b-c)-n$ times in $X\cup Y$. To fit this many symbols of $S$
into $X\cup Y$, we require
\begin{equation}\label{e:eqhardcond}
2(a-c)(b-c)=|X\cup Y|\ge \big(2(a+b-c)-n\big)|S|.
\end{equation}
Upon multiplying this inequality by $2$ and rearranging, 
it becomes \eref{e:hardcond}.

It remains to show sufficiency. Assuming \eref{e:easycond}, we have $b\ge
a\ge2c$ and so it is possible to construct $A$, $B$ and $C$ as in
\eref{e:formofL}. We aim to fill in $X\cup Y$ in such a way that the symbols in
$S$ each occur at least $\big\lfloor2(a-c)(b-c)/|S|\big\rfloor$ times in $X\cup
Y$. By \eref{e:eqhardcond} this would mean that each symbol in $S$ occurs at
least $\lfloor2(a+b-c)-n\rfloor=2(a+b-c)-n$ times in $R$. Furthermore, every
symbol of $A\cup B$ occurs at least $a$ times in $R$, and
$a\ge2(a+b-c)-n$ by \eref{e:easycond}. Therefore, if $X\cup Y$ can be filled as
desired, Ryser's condition holds for $R$, and $L$ exists.

To fill $X\cup Y$, we order the cells of $X\cup Y$ in a sequence such that any
subsequence of $2(a-c)$ consecutive terms contains cells from distinct rows and
columns. This is easily achieved by alternating ``diagonals'' of $X$ and
``diagonals'' of $Y$. We then fill the cells in the order determined by the
sequence, using all occurrences of one symbol before starting with the next
symbol. Some symbols (it does not matter which) should be designated to occur
$\big\lfloor2(a-c)(b-c)/|S|\big\rfloor$ times, while the others occur
$\big\lceil2(a-c)(b-c)/|S|\big\rceil$ times. Note that $|S|\ge b-c$ and hence
$\big\lceil2(a-c)(b-c)/|S|\big\rceil\le2(a-c)$, which means that our
construction will not violate the latin property.
\end{proof}

The following corollary gives a sufficient (though not in general necessary)
condition, and it is easier to apply than \tref{t:overlapsubsq}.

\begin{corollary}\label{c:subsquares}
Suppose $0<c<a\le b<n$ are integers. If $n-2b\ge a-2c$ and $a\ge \frac{5}{2}c$
then there is a latin square of order $n$ containing subsquares of order $a$
and $b$ whose intersection is a subsquare of order $c$.
\end{corollary}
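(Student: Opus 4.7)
My plan is to deduce the corollary from \tref{t:overlapsubsq} by verifying that its hypotheses \eref{e:easycond} and \eref{e:hardcond} follow from those of the corollary. The inequality $n-2b\ge a-2c$ is assumed outright, while $a-2c\ge 0$ is immediate from $a\ge\tfrac{5}{2}c$; in fact one gets the stronger bound $a-2c\ge\tfrac{c}{2}$, which I will use shortly. So \eref{e:easycond} holds.

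The real content is \eref{e:hardcond}. My approach is a change of variables that exposes where the inequality nearly fails. Set
$$v=a-2c,\qquad w=b-a,\qquad d=n-2b-v,$$
all of which are nonnegative integers (with $v\ge c/2$). Then $n-2a=2w+v+d$, $n-2b=v+d$, and $n-2a-2b+3c=d-(a-c)$. Substituting these into \eref{e:hardcond}, expanding, and using the identity $(a-c)^2-c^2=a(a-2c)=av$, the inequality reduces after routine algebra to
$$v(2w+v+a)+2d(d+w-c)\ge 0.$$

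With this reformulation the rest is short. Since $v\ge c/2$ and $a\ge \tfrac{5}{2}c$, the first summand is at least $va\ge \tfrac{5c^2}{4}$. The second summand, viewed as a quadratic in $d$, is minimized over $d\ge0$ at $d=(c-w)/2$ when $w<c$ and at $d=0$ when $w\ge c$; in either case one checks $2d(d+w-c)\ge -\tfrac{c^2}{2}$. Adding, the sum is at least $\tfrac{5c^2}{4}-\tfrac{c^2}{2}=\tfrac{3c^2}{4}>0$, establishing \eref{e:hardcond}.

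The only non-routine step I anticipate is finding the change of variables that produces the clean form above. Once that is in place, the quadratic bounds are immediate, and the hypothesis $a\ge\tfrac{5}{2}c$ is precisely what is needed to beat the $-c^2/2$ deficit coming from the second summand.
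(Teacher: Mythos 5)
Your proof is correct. The change of variables checks out ($n-2a=2w+v+d$, $n-2b=v+d$, $n-2a-2b+3c=d-(a-c)$), the reduction of \eref{e:hardcond} to $v(2w+v+a)+2d(d+w-c)\ge0$ is right (using $(a-c)^2-c^2=av$ and $2vd-2d(a-c)=-2cd$), and the bounds $v(2w+v+a)\ge va\ge\tfrac54c^2$ and $2d(d+w-c)\ge-\tfrac12(c-w)^2\ge-\tfrac12c^2$ for $d\ge0$, $w\ge0$ are all valid, giving a surplus of at least $\tfrac34c^2>0$. Your overall strategy --- verify \eref{e:easycond} and \eref{e:hardcond} and invoke \tref{t:overlapsubsq} --- is the same as the paper's, but the verification of \eref{e:hardcond} is genuinely different. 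The paper splits into two cases: if $n\ge2b+c$ then both factors $n-2a\ge n-2b\ge c$, so the left side of \eref{e:hardcond} is at least $c^2$, which dominates the right side; if $n\le2a+2b-4c$ then $(n-2a-2b+3c)^2\ge c^2$, so the right side is nonpositive while the left side is nonnegative; and the two cases are exhaustive exactly because their simultaneous failure would force $2a<5c$. That argument is shorter and makes it transparent where the constant $\tfrac52$ comes from. Your substitution takes more setup but produces an explicit quantitative margin and locates where the inequality is tightest (near $d=(c-w)/2$), information the case split conceals. Both are complete proofs of the corollary.
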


\begin{proof}
The two conditions $n-2b\ge a-2c$ and $a\ge \frac{5}{2}c$ imply
\eref{e:easycond}, so it remains to show \eref{e:hardcond}.

If $n\ge 2b+c$ then $(n-2a)(n-2b)\ge c^2$ and \eref{e:hardcond} follows. If
$n\le 2a+2b-4c$ then $(n-2a-2b+3c)^2\ge c^2$, so the right hand side of
\eref{e:hardcond} is negative, and \eref{e:hardcond} follows once again.

Otherwise we have $n<2b+c$ and $n>2a+2b-4c$, which gives $5c>2a$, a
contradiction.
\end{proof}

Next, we examine the case of a latin square with two subsquares that do not
overlap.

We say that two subsquares \emph{share a row} if there is some row
that intersects both subsquares. Sharing a column or symbol is defined
analogously.  By the operation known as {\em conjugacy} or {\em
parastrophy} (a permutation of the roles of rows, columns and
symbols in a latin square) we may assume that if the subsquares share
anything, they share a row.  If two disjoint subsquares share a row,
it immediately follows that they do not share a column or symbol.


\begin{lemma}\label{l:sharesome}
Suppose $0\le c<a\le b<n$ are integers. In order for there to exist a latin
square of order $n$ containing two subsquares of order $a$ and $b$ that share
exactly $c$ rows and do not share any columns or symbols, it is necessary and
sufficient that $n\ge a+2b$.
\end{lemma}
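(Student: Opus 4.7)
For necessity, note that because $c<a$ there is a row $r$ of $A$ that is not a row of $B$. The $a$ cells of $A$ in row $r$ already display every symbol of $A$. Now look at the $b$ cells of row $r$ that lie in the columns of $B$. Such a cell $(r,j)$ cannot hold a symbol of $A$ (already used in row $r$), nor a symbol of $B$ (since column $j$ exhibits every symbol of $B$ in the rows of $B$, and $r$ is not one of them). So these $b$ cells contain $b$ distinct symbols drawn from the $n-a-b$ symbols outside $A\cup B$, forcing $b\le n-a-b$, that is, $n\ge a+2b$.

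For sufficiency, assume $n\ge a+2b$. Place $A$ on rows, columns and symbols $\{1,\dots,a\}=:S_1$, and place $B$ on rows $\{1,\dots,c\}\cup\{a+1,\dots,a+b-c\}$, columns $\{a+1,\dots,a+b\}$, and symbols $\{a+1,\dots,a+b\}=:S_2$. Let $S_3:=\{a+b+1,\dots,n\}$, so $m:=|S_3|=n-a-b\ge b$. The rows and columns of $A\cup B$ span an $(a+b-c)\times(a+b)$ region $R$; besides $A$ and $B$ themselves, $R$ contains only two empty blocks, namely $P$ of size $(a-c)\times b$, in the rows of $A$ not shared with $B$ and the columns of $B$, and $Q$ of size $(b-c)\times a$, in the rows of $B$ not shared with $A$ and the columns of $A$. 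Repeating the necessity argument inside $R$ forces every entry of $P$ and of $Q$ to be drawn from $S_3$.

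The plan is now to fill $P$ and $Q$ with $S_3$-symbols so that $R$ becomes a genuine $(a+b-c)\times(a+b)$ latin rectangle, and then to extend $R$ to a latin square of order $n$ using Ryser's theorem. Since the entries of $P$ all lie in $S_3$, they are automatically disjoint from the $S_1$-entries to the left in their rows (coming from $A$) and from the $S_2$-entries above and below in their columns (coming from $B$); hence to make $R$ latin it suffices to make $P$ row-latin and column-latin on whatever subset of $S_3$ it uses, and similarly for $Q$. Ryser's condition then requires $\Gamma(i)\ge(a+b-c)+(a+b)-n=2(a+b)-c-n$ for every symbol $i$. Symbols in $S_1$ have $\Gamma(i)=a$ and those in $S_2$ have $\Gamma(i)=b$; both exceed the bound thanks to $n\ge a+2b$ and $b\ge a$.

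The main obstacle is meeting Ryser's bound for each symbol of $S_3$. The total $S_3$-count in $R$ is $|P|+|Q|=2ab-c(a+b)$ while the aggregate quota is $m\bigl(2(a+b)-c-n\bigr)$, and a direct manipulation shows the budget exceeds the quota by $a(b-c)>0$ when $n=a+2b$ (and by more for larger $n$), so the averaged inequality is slack. To realise this symbol-by-symbol I would fill $P$ and $Q$ by the sequential device used in the proof of \tref{t:overlapsubsq}: order the cells of $P$ along its wraparound diagonals so that any $a-c$ consecutive cells lie in distinct rows and columns (and likewise for $Q$, with $b-c$ in place of $a-c$), then lay down the symbols of $S_3$ in blocks of near-equal size, using each symbol to completion before moving to the next, coordinating the choices between $P$ and $Q$ so that every $S_3$-symbol is used sufficiently often overall. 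The diagonal ordering prevents row- and column-repeats, $|S_3|\ge b\ge a$ keeps the block lengths within the allowed window, and the surplus budget ensures every $S_3$-symbol reaches the Ryser threshold. Once $R$ satisfies all of Ryser's inequalities, \tref{t:ryser} produces a latin square of order $n$ extending $R$, and this square contains the required pair of subsquares.
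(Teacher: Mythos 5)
Your proof is correct and follows essentially the same route as the paper: the same row-counting argument for necessity, and for sufficiency the same application of Ryser's theorem to $R=A\cup B\cup P\cup Q$ with a greedy diagonal-ordered filling of the two empty blocks and verification of the aggregate inequality $(a-c)b+(b-c)a\ge(n-a-b)(2a+2b-c-n)$. The only cosmetic difference is that you order $P$ and $Q$ separately and ``coordinate'' the symbol counts, whereas the paper interleaves the diagonals of the two blocks into a single sequence; both devices yield the same multiplicity bound $a-c+\min\{b-c,a\}$ per symbol.
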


\begin{proof}Up to permutation of the rows and columns, our latin square, if
it exists, looks like
\begin{equation}
\begin{tabular}{|c|c;{1pt/1.5pt}c|}
\hline
\multicolumn{1}{|l|}{\smash{\lower6pt\hbox{$A$}}}&$X$&\\
\cline{2-2}
&\multicolumn{1}{r|}{}&\\[-1ex]
\cline{1-1}
$Y$&\multicolumn{1}{r|}{\smash{\raise6pt\hbox{$\;B\;$}}\vrule height 16pt depth 6pt width 0pt}
&\vrule height 0pt depth 0pt width 15pt\\
\cdashline{1-1}[1pt/1.5pt]\cline{2-2}
\multicolumn{3}{|c|}{\vrule height 20pt width 0pt}\\
\hline
\end{tabular}\,
\end{equation}
where $X$ and $Y$ are (non-empty) submatrices of dimensions $(a-c)\times b$ and
$(b-c)\times a$ respectively. The symbols in $X\cup Y$ must be distinct from
those in $A\cup B$, and by assumption, the symbols in subsquare $A$ are
distinct from the symbols in subsquare $B$. Each row of $X$ must contain
$b$ symbols that are different from the $a+b$ symbols in $A\cup B$, which
demonstrates the necessity of the condition $n\ge 2b+a$.

To prove the sufficiency of the condition $n\ge 2b+a$, we apply \tref{t:ryser}
to $R=A\cup B\cup X\cup Y$.  Symbols in $A$ each occur $a$ times in $R$, so we
need $a\ge a+b-c+a+b-n$, that is, $n\ge a+2b-c$. Similarly, considering the
symbols in $B$ leads to $n\ge2a+b-c$. Both of these inequalities hold if
$n\ge2b+a$, given that $a\le b$.

We fill in the symbols in $X\cup Y$ in the same way that we did in
\tref{t:overlapsubsq}. Each symbol in $X\cup Y$ will occur at most
\begin{align*}
\bigg\lceil\frac{(a-c)b+(b-c)a}{n-a-b}\bigg\rceil
&\le\bigg\lceil\frac{(a-c)b+(b-c)a}{b}\bigg\rceil\\
&= a-c+\bigg\lceil\frac{(b-c)a}{b}\bigg\rceil\\
&\le a-c+\min\{b-c,a\}
\end{align*}
times, which means that we will not allocate the same symbol to two different
cells in the same row or column.

Moreover, the symbols in $X\cup Y$ will satisfy Ryser's condition, given that
$(a-c)b+(b-c)a\ge(n-a-b)(2a+2b-c-n)$. This last condition is algebraically
equivalent to $(n-2a-b)(n-a-2b+c)+(a-c)b\ge0$, which is obviously true.
\end{proof}

\lref{l:sharesome} did not cover the case when $a=c$. It needs to be treated
separately:

\begin{lemma}\label{l:shareall}
Suppose $0<a\le b<n$ are integers. In order for there to exist a latin square
of order $n$ containing two subsquares of order $a$ and $b$ that share exactly
$a$ rows and do not share any columns or symbols, it is necessary and
sufficient that either $a=b=n/2$ or $n\ge\max\{2a+b,2b\}$.
\end{lemma}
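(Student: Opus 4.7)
The plan is to set up the configuration in the standard way, derive necessity from simple counting, and prove sufficiency either by an explicit construction in the degenerate case $a=b=n/2$ or by a Ryser argument parallel to the proof of \lref{l:sharesome} specialised to the boundary value $c=a$ otherwise. Place $A$ in rows and columns $1,\dots,a$ and $B$ in rows $1,\dots,b$ and columns $a+1,\dots,a+b$, with disjoint symbol sets $S_0,S_1$, and let $S_2=\{1,\dots,n\}\setminus(S_0\cup S_1)$, so $|S_2|=n-a-b$.

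For necessity, $n\ge 2b$ is automatic because $B$ is a proper subsquare of order $b$. If $n=a+b$, then $b\le n/2$ collapses the inequalities to $a=b=n/2$. If instead $n>a+b$, the $a\times(n-a-b)$ block in rows $1,\dots,a$ and columns $a+b+1,\dots,n$ must contain all $|S_2|$ new symbols once in each row; column-distinctness then forces $n-a-b\ge a$, i.e., $n\ge 2a+b$.

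For sufficiency when $a=b=n/2$, the construction is explicit: the top half consists of $A$ beside $B$; beneath $A$ I place any latin square of order $n/2$ on $S_1$, and beneath $B$ any latin square of order $n/2$ on $S_0$. For sufficiency when $n\ge\max\{2a+b,2b\}$, I fill the $(b-a)\times a$ block $Z$ (rows $a+1,\dots,b$, columns $1,\dots,a$) with symbols from $S_2$ so that no row or column of $Z$ has a repeated symbol and each symbol of $S_2$ appears at least $\lfloor(b-a)a/|S_2|\rfloor$ times. This is feasible because the hypothesis is equivalent to $|S_2|\ge\max(a,b-a)$, which keeps the required symbol multiplicity below the partial-transversal cap $\min(a,b-a)$; the filling is produced by the diagonal-ordering procedure used in the proof of \tref{t:overlapsubsq}. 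I then apply \tref{t:ryser} to the completed rectangle $R=A\cup B\cup Z$ of dimensions $b\times(a+b)$ with threshold $r+s-n=a+2b-n$; the counts $a$, $b$, and $\lfloor(b-a)a/|S_2|\rfloor$ (for $S_0,S_1,S_2$ respectively) all clear this bar, with the only non-trivial check $(b-a)a\ge(n-a-b)(a+2b-n)$ rearranging algebraically to $(n-2a-b)(n-2b)\ge 0$, which holds by hypothesis. Thus $R$ embeds in a latin square of order $n$.

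The main obstacle is producing the partial filling of $Z$ that simultaneously satisfies the Ryser lower bound on symbol multiplicities and the row/column-distinctness upper bound $\min(a,b-a)$; both constraints turn out to be equivalent to the lemma's hypothesis $n\ge\max\{2a+b,2b\}$, so the sufficiency condition is exactly tight for this Ryser-based construction and there is no slack to absorb in the argument.
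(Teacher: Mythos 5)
Your proof is correct, and while the necessity half matches the paper's (counting distinct symbols in a column of the block to the right of $A\cup B$ in the shared rows, plus the standard bound $b\le n/2$), your sufficiency argument for the case $n\ge\max\{2a+b,2b\}$ takes a genuinely different route. The paper works with the layout in which $B$ sits in the top-left corner and applies \tref{t:ryser} only to $A$, viewed as an $a\times a$ square to be embedded in a latin square of order $n-b$ on the symbols outside $B$; the conditions $0\ge 2a-(n-b)$ and $b\le n-b$ drop out immediately, the first $b$ rows of that embedding are stacked beside $B$, and the resulting $b\times n$ latin rectangle is completed. You instead specialise the method of \lref{l:sharesome} to the boundary value $c=a$ (where the region $X$ degenerates to nothing and only $Y=Z$ survives): you explicitly fill the $(b-a)\times a$ block with near-uniform symbol frequencies and then apply Ryser once to the full $b\times(a+b)$ rectangle. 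Your algebra checks out: $(b-a)a-(n-a-b)(a+2b-n)=(n-2a-b)(n-2b)\ge0$, and $|S_2|\ge\max(a,b-a)$ is indeed equivalent to the hypothesis and caps the multiplicities at $\min(a,b-a)$. The paper's route is shorter and avoids any explicit filling; yours has the virtue of unifying this lemma with \lref{l:sharesome} under one construction and of making the tightness of the condition visible.

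One small caveat on your filling of $Z$: the diagonal-ordering trick from \tref{t:overlapsubsq} alternates diagonals of \emph{two} rectangles lying in disjoint rows and columns, which is what guarantees that a window of $2(a-c)$ consecutive cells avoids column clashes. For a single $p\times q$ rectangle the naive analogue can fail --- a run of $\min(p,q)$ consecutive cells straddling two adjacent broken diagonals can repeat a column (already for a $2\times2$ block no ordering makes every pair of consecutive cells a partial transversal). This is repairable: traverse the broken diagonals in the appropriate (decreasing-offset) order when $b-a\ne a$, and in the square case $b=2a$ give each symbol of maximal multiplicity a full diagonal and pack the rest in shorter runs (or, when $|S_2|=a$, simply take a latin square of order $a$ on $S_2$). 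It is a detail at the same level of informality as the paper's own "easily achieved" remark, but since $Z$ is a single rectangle rather than a union of two, it deserves a sentence.
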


\begin{proof}First suppose that $n>a+b$. Then, 
up to permutation of the rows and columns, our latin square looks like this:
\begin{equation}\label{e:formofL'}
\begin{tabular}{|c|c|c|}
\hline
{\smash{\lower8pt\hbox{$\;\;B\;\;$}}}&$A$\vrule depth 7pt height 14pt width 0pt&$\;X\;$\\
\cline{2-2}\cdashline{3-3}[1pt/1.5pt]&\multicolumn{2}{c|}{$Y$\vrule depth 0pt height 12pt width 0pt}\\
\cline{1-1}\cdashline{2-3}[1pt/1.5pt]
\multicolumn{3}{|c|}{}\\
\multicolumn{3}{|c|}{}\\
\multicolumn{3}{|c|}{}\\
\hline
\end{tabular}
\end{equation}
where as usual, $A$ and $B$ are the subsquares of order $a$ and $b$.

Once we are given $A$, \tref{t:ryser} shows that to be able to fill in $A\cup
X\cup Y$ it is necessary and sufficient that $0\ge 2a-(n-b)$ and $b\le n-b$.
The remainder of the latin square can then always be completed. Hence, for the
latin square to exist when $n>a+b$, it is necessary and sufficient that
$n\ge\max\{2a+b,2b\}$.

That leaves the case $n=a+b$. In that case, the submatrix $X$ in
\eref{e:formofL'} has no columns. Also, there are no symbols available to fill
the submatrix $Y$, so we are forced to make $a=b$. With that condition, it is
trivial to complete a latin square of order $n=2a=2b$.
\end{proof}


We have now considered all possible ways that a latin square might
contain subsquares of order $a$ and $b$. \tref{t:overlapsubsq} (and
its preamble) handles the case when the subsquares intersect. Disjoint
subsquares that share any row, column or symbol are covered (up to
conjugacy/parastrophy) by \lref{l:sharesome} and
\lref{l:shareall}. Subsquares that do not share any row, column or
symbol are covered explicitly by \lref{l:sharesome} (the same result
would be obtained by taking $c=0$ in \tref{t:overlapsubsq}, although
for simplicity we did not allow that case in our phrasing of the
theorem). Putting these results together, we get:

\begin{theorem}
Suppose $1<a \leq b <n$ are integers.  In order for there to exist a
latin square of order $n$ containing two distinct subsquares of order
$a$ and $b$, the following set of conditions is necessary and sufficient:
\begin{itemize}
\item[(a)]  $n \geq 2b$,
\item[(b)]  if $n=2b+1$ and $a$ is even then $a \leq \frac{2}{3}(b+1)$ and
\item[(c)]  if $n=2b$ and $a$ is odd then $a=b$ or $a \leq b/2$.
\end{itemize}
\end{theorem}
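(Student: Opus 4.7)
The plan is to piece together \tref{t:overlapsubsq}, \lref{l:sharesome} and \lref{l:shareall} with the trivial nested construction (one subsquare inside the other, which exists whenever $b\ge 2a$) to cover every possible configuration of two distinct subsquares of orders $a$ and $b$. By conjugacy, any such pair is either nested, intersects in a subsquare of order $c$ with $0<c<a$, or is disjoint while sharing exactly $c$ rows and no columns or symbols for some $0\le c\le a$. The proof is then a case analysis indexed by $n-2b$, the parity of $a$, and the configuration type.

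For necessity, (a) is the standard bound $b\le n/2$. For (b) and (c), I assume the respective hypothesis and check that no configuration fits. The nested case requires $b\ge 2a$, which fails once $a>b/2$ (implied by the hypotheses of both (b) and (c)). The disjoint cases are ruled out because \lref{l:sharesome} demands $n\ge a+2b$, and \lref{l:shareall} (for $a<b$) demands $n\ge 2a+b$; both are incompatible with $n\in\{2b,2b+1\}$ in these regimes. That leaves the intersecting case. Condition \eref{e:easycond} forces $a-2c$ to be a non-negative integer at most $n-2b$ with the same parity as $a$, so when $n=2b$ and $a$ is odd no admissible integer $c$ exists (yielding (c)), while when $n\in\{2b,2b+1\}$ and $a$ is even we must have $c=a/2$. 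Substituting $c=a/2$ into \eref{e:hardcond} reduces it to $a\le\frac{2}{3}(b+1)$ when $n=2b+1$ (yielding (b)) and to a tautology when $n=2b$.

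For sufficiency I split on $n-2b$. If $n\ge 2b+a$, apply \lref{l:sharesome} with $c=0$. If $n=2b$ and $a=b$, apply \lref{l:shareall}. If $n=2b$ and $a\le b/2$, nest an order-$a$ subsquare inside an order-$b$ subsquare via standard block constructions. In every remaining subcase we have $2b\le n<2b+a$, and I apply \tref{t:overlapsubsq} with $c=\lceil(a-(n-2b))/2\rceil$; a direct check shows \eref{e:hardcond} holds in each subcase, the only tight one being $n=2b+1$ with $a$ even, where the inequality becomes exactly (b). The main obstacle is the parity-based bookkeeping: it is precisely the integrality obstruction to $c=a/2$ when $n\in\{2b,2b+1\}$ and $a$ is odd, together with the tightness of \eref{e:hardcond} at $c=a/2$ when $n=2b+1$ and $a$ is even, that produces the exceptional conditions (b) and (c). Once these are tracked carefully, every subcase reduces to a direct invocation of one of the three earlier results.
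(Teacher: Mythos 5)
Your proposal is correct and follows essentially the same route as the paper: reduce everything to \tref{t:overlapsubsq}, \lref{l:sharesome}, \lref{l:shareall} and the nested case, and for $2b\le n<2b+a$ pick the intersection order $c$ to make $a-2c$ as small as \eref{e:easycond} allows, which is exactly where the parity obstructions (b) and (c) and the tight instance of \eref{e:hardcond} at $n=2b+1$ arise. The only differences are organizational (your single formula $c=\lceil(a-(n-2b))/2\rceil$ unifies the paper's parity split, and you handle the $n=2b$ case by direct appeal to the lemmas rather than the four-block decomposition), and the computations you defer to a ``direct check'' do go through.
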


\begin{proof} As usual, $A$ and $B$ denote subsquares of order $a$ and $b$ in
a latin square $L$ of order $n$.

Condition (a) is clearly necessary.
If $n\ge2b+a$ then we can use \lref{l:sharesome}, so we may assume that
$2b\le n<2b+a$.

Suppose first that $n\equiv a\mod2$. Take $c=(a+2b-n)/2$ and note that
\eref{e:easycond} holds. Moreover,
\[(n-2a)(n-2b)\ge0\ge a(2b-n)=c^2-(n-2a-2b+3c)^2\]
so \eref{e:hardcond} holds as well and we are done. Henceforth we may
assume that $n\not\equiv a\mod2$. At this point we split into three cases.

\def\case{\medskip\noindent{\bf Case: \ }}

\case $n\ge2b+2$

Take $c=(a+2b-n+1)/2$ and note that \eref{e:easycond} holds. Moreover,
\[(n-2a)(n-2b)\ge0\ge (a-1)(2b-n+2)=c^2-(n-2a-2b+3c)^2\]
so \eref{e:hardcond} holds as well and we are done.

\case $n=2b+1$

Then $a$ is even since $n\not\equiv a\mod2$. If $A$ and $B$ intersect
in a smaller subsquare then \eref{e:easycond} implies that we must
have $a=2c$. In turn, \eref{e:hardcond} tells us that
$2b+1\ge3a-1$. Hence if $a\leq\frac23(b+1)$ then
\tref{t:overlapsubsq} shows that $L$ exists. Otherwise
$a>\frac23(b+1)>\frac12b$, so the only hope of constructing $L$ 
is that $A$ and $B$ do not
overlap. However, we have $n=2b+1<b+\frac32a<b+2a\le2b+a$, which
breaches the necessary conditions in both \lref{l:sharesome} and
\lref{l:shareall}. So $A$ and $B$ cannot be disjoint.

\case $n=2b$

A latin square of order $n$ containing a subsquare of order $n/2$ must
be composed of four disjoint subsquares of order $n/2$. Thus by
changing our choice of $B$ if necessary, we can assume that $a=b$ or
that $A$ and $B$ intersect.  Since $n\not\equiv a\mod2$ we find that
$a$ is odd, but this makes \eref{e:easycond} impossible to satisfy. So
$A$ and $B$ cannot intersect unless $A$ is wholly inside $B$, in which
case $a=b$ or $a\le b/2$ (and both options are achievable).
\end{proof}

\section{A bound on the number of subsquares}

The following simple bound on the number of subsquares in a latin
square generalises results by Heinrich and Wallis \cite{HW} and van
Rees \cite{vR} who proved the $m=2$ and $m=3$ cases, respectively.

\begin{theorem}\label{t:subsqbnd}
Let $m,n$ be positive integers, and define $h=\big\lceil(m+1)/2\big\rceil$.
No latin square of order $n$ may have more than
\[\frac{n{n \choose h}}{m{m\choose h}}\]
subsquares of order $m$.
\end{theorem}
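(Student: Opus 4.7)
The plan is a simple double-counting argument. Let $L$ be a latin square of order $n$ with $N$ subsquares of order $m$, and set $h=\lceil(m+1)/2\rceil$, so that $h>m/2$ for every positive $m$. I will count the set of triples $(A,H,c)$ in which $A$ is an order-$m$ subsquare of $L$, $H$ is an $h$-subset of the rows of $A$, and $c$ is a single column of $A$. Organising by $A$ first, the total is exactly $N\cdot m\binom{m}{h}$, since each $A$ admits $\binom{m}{h}$ choices for $H$ and $m$ choices for $c$. Organising instead by $(H,c)$, where $H$ ranges over $h$-subsets of rows of $L$ and $c$ ranges over columns of $L$, the count is bounded by $n\binom{n}{h}$ provided I can establish the following uniqueness claim.

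The claim is that for each pair $(H,c)$ there is at most one order-$m$ subsquare $A$ of $L$ with $H\subseteq R_A$ and $c\in C_A$. This is the heart of the argument. Suppose $A_1$ and $A_2$ are two such subsquares. Every cell $(r,c)$ with $r\in H$ lies in both, so the cell-intersection is non-empty. By the two facts recalled in the paragraph preceding Theorem~\ref{t:ryser}, namely that the intersection of two intersecting subsquares is itself a subsquare and that a proper subsquare of an order-$m$ latin square has order at most $m/2$, the intersection $A_1\cap A_2$ is a subsquare of some order $k$ satisfying $k=|R_{A_1}\cap R_{A_2}|\ge |H|=h$. The inequality $k\ge h>m/2$ rules out $k\le m/2$, so $k=m$, and hence $A_1=A_1\cap A_2=A_2$.

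Combining the two counts yields $N\cdot m\binom{m}{h}\le n\binom{n}{h}$, which rearranges to the stated bound. I anticipate no real obstacle beyond the uniqueness claim; once the two classical facts about subsquare intersections are in hand, the numerical inequality $h>m/2$ does all the heavy lifting, and everything else is bookkeeping.
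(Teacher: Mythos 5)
Your proof is correct and is essentially the paper's argument: the paper counts sets of $h$ cells within a single row of $L$ (so $n\binom{n}{h}$ configurations, $m\binom{m}{h}$ per subsquare) and uses $h>m/2$ together with the half-order property of proper subsquares to get the same uniqueness claim. Your version with $h$ rows and one column is just the transpose of that configuration, so the two proofs differ only cosmetically.
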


\begin{proof}
Suppose $L$ is a latin square of order $n$, and consider a set $H$ of
$h$ entries from within one row of $L$.  Since $h>m/2$, we know from
\eref{e:easycond} that at most one $m\times m$ subsquare of $L$ can
contain $H$. There are $n{n\choose h}$ possible choices for $H$, and
each $m\times m$ subsquare contains $m{m\choose h}$ of them.
\end{proof}


By refining the idea of \tref{t:subsqbnd} we can prove a better bound
when $m$ is large. All asymptotics in the remainder of this section
are for $n\rightarrow\infty$ with other quantities fixed.

\begin{theorem}\label{t:bttrbnd}
Let $m,n,t$ be positive integers with $n\ge m\ge t$.
Define $\psi(m,t)$ by
\[
\psi(m,t)=
\begin{cases}
\big\lfloor m/(2 t) \big\rfloor+1 & \mbox{if $m$ is even},\\
\big\lceil \frac{1}{2} \lfloor m/t\rfloor \big\rceil & \mbox{if $m$ is odd}.
\end{cases}
\]
No latin square of order $n$ can have more than $O(n^{\psi(m,t)+t})$
subsquares of order $m$.
\end{theorem}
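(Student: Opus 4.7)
The strategy is to refine \tref{t:subsqbnd} by using a \emph{distributed} witness for each subsquare: rather than identifying $S$ via $h$ entries in a single row, we use $t$ rows of $S$ together with $\psi(m,t)$ further data points---concretely, a $\psi(m,t)$-subset of columns of $S$. The plan is to fix a $t$-subset $R$ of the $n$ rows of $L$, bound by $O(n^{\psi(m,t)})$ the number of subsquares of order $m$ containing all of $R$, and then sum over the $\binom{n}{t}=O(n^t)$ choices of $R$, dividing by the overcounting factor $\binom{m}{t}$ (each order-$m$ subsquare contains $\binom{m}{t}$ distinct $t$-subsets of rows). This yields the claimed bound $O(n^{t+\psi(m,t)})$.

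To obtain the inner bound, to each subsquare $S$ of order $m$ containing $R$ I would associate a secondary signature consisting of a $\psi(m,t)$-subset of columns of $S$. There are $\binom{n}{\psi(m,t)}=O(n^{\psi(m,t)})$ candidate secondary signatures, and each $S$ produces $\binom{m}{\psi(m,t)}$ of them, a constant depending only on $m$ and $t$. The bound then reduces to showing that at most a constant number of order-$m$ subsquares are simultaneously consistent with a fixed $R$ and a fixed secondary signature. For the uniqueness, any two distinct subsquares $S_1\neq S_2$ of order $m$ would intersect in a subsquare of order $c\le\lfloor m/2\rfloor$ by \eref{e:easycond}, while sharing all rows of $R$ and all columns of the secondary signature forces $c\ge\max(t,\psi(m,t))$. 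The parity-sensitive definition of $\psi(m,t)$ is calibrated so that, combined with the structural constraints arising from the $t\times\psi(m,t)$ sub-rectangle of $L$ inside the hypothetical intersection, $c$ is forced past $\lfloor m/2\rfloor$, yielding the contradiction.

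The main obstacle will be this uniqueness step. The elementary bound $c\ge\max(t,\psi(m,t))$ on its own does not contradict $c\le\lfloor m/2\rfloor$ when $t$ and $\psi(m,t)$ are both at most $\lfloor m/2\rfloor$, which is exactly the regime optimizing the polynomial degree (around $t\approx\sqrt{m/2}$). One must therefore track how the $t$ rows of $R$ and the $\psi(m,t)$ secondary columns \emph{jointly} constrain the intersection subsquare, via a pigeonhole-style argument that in effect makes $t\cdot\psi(m,t)$ (or a carefully rounded analogue of it) exceed $\lfloor m/2\rfloor$. The two parity cases in the definition of $\psi(m,t)$ close the integer gap between $\lfloor m/2\rfloor$ and $m/2$ with minimal slack: for even $m$, the ``$+1$'' in $\lfloor m/(2t)\rfloor+1$ pushes the product past $m/2$ by at least one unit; for odd $m$, the outer ceiling in $\lceil\lfloor m/t\rfloor/2\rceil$ compensates for the non-integrality of $m/2$ to the same effect. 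Failing a purely uniqueness-based closure in the boundary cases, one can fall back on a bounded-multiplicity version (each signature appears in at most $C(m,t)$ subsquares), which still gives the desired polynomial degree.
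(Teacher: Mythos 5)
There is a genuine gap at exactly the step you flag as the ``main obstacle,'' and the mechanism you propose for closing it does not work. Your secondary signature is a $\psi(m,t)$-subset of columns, and you hope that sharing $t$ rows and $\psi(m,t)$ columns forces the intersection subsquare of two distinct order-$m$ subsquares to have order exceeding $\lfloor m/2\rfloor$, ``in effect making $t\cdot\psi(m,t)$ exceed $\lfloor m/2\rfloor$.'' But the intersection of two subsquares is itself a subsquare whose row set contains the shared rows and whose column set contains the shared columns; since a subsquare has equal numbers of rows and columns, this only yields $c\ge\max(t,\psi(m,t))$, never anything like $t\cdot\psi(m,t)$. In the optimizing regime $t\approx\psi(m,t)\approx\sqrt{m/2}$ this is far below $\lfloor m/2\rfloor$, so neither uniqueness nor bounded multiplicity follows: a priori, polynomially many order-$m$ subsquares can share $t$ given rows and $\psi(m,t)$ given columns. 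There is no pigeonhole argument that upgrades $\max$ to a product here, so the proof cannot be completed along these lines.

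The missing idea --- and the reason the parity-sensitive definition of $\psi(m,t)$ has the form it does --- is to replace ``column'' by a richer unit of secondary data: a \emph{block}, meaning a minimal latin subrectangle of $L$ occupying the chosen $t$ rows. Within $t$ fixed rows of $L$ there are only $O(n)$ blocks (a block is determined by any one of its columns), so choosing up to $\psi(m,t)$ of them still costs only $O(n^{\psi(m,t)})$; but each block certifies at least $t$ shared columns, not one. The first $t$ rows of a subsquare $S$ of order $m$ partition into at most $\lfloor m/t\rfloor$ such blocks, and greedily selecting the largest ones shows that $\psi(m,t)$ blocks suffice to cover strictly more than $m/2$ columns of $S$ (the two parity cases of $\psi$ are calibrated exactly for this covering bound). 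Any other order-$m$ subsquare containing those blocks would then intersect $S$ in a subsquare of order $c>m/2$, contradicting \eref{e:easycond}. Your counting skeleton ($O(n^t)$ row choices times $O(n^{\psi(m,t)})$ secondary choices) matches the paper's, but without the block device the degree-$\psi(m,t)$ secondary signature cannot carry enough column information to force the contradiction.
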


\begin{proof}
Let $L$ be a latin square of order $n$. Define a {\em block} to be
a latin subrectangle of $L$ with exactly $t$ rows, that is minimal in
the sense that it does not contain any smaller such latin rectangle.

Suppose that we choose up to $\psi(m,t)$ blocks that lie in the same
$t$ rows of $L$.  We claim that for every subsquare $S$ of order $m$
in $L$ there is (at least) one such choice of blocks that lies in $S$ and not
in any other subsquare of order $m$. It will then follow that the number
of $m\times m$ subsquares is not more than $O(n^{\psi(m,t)+t})$ since there are
$O(n^t)$ ways to choose $t$ rows and $O(n^{\psi(m,t)})$ ways to choose up
to $\psi(m,t)$ blocks in those rows.

To prove our claim, we choose blocks from the first $t$ rows of $S$
until the chosen blocks cover strictly more than $m/2$ columns.  By
\eref{e:easycond} this will guarantee that no other subsquare of order
$m$ contains these blocks.  At each step we select a block that is at
least as large as any of the remaining blocks.  This ensures that we
need no more than $\psi(m,t)$ blocks, as the following argument shows.

There are at most $\lfloor m/t\rfloor$ blocks within the first $t$
rows of $S$, since each block uses at least $t$ columns.  If $m$ is
odd, we need only choose at least half of them and we are done.  So
suppose $m$ is even. If $m/2$ is divisible by $t$ then
$t\psi(m,t)=\frac12 m+t>\frac12 m$ and otherwise $t\psi(m,t)=t\lceil
m/(2t)\rceil>\frac12 m$. Thus, as claimed, there is no case where we need
more than $\psi(m,t)$ blocks to determine $S$.
\end{proof}

\begin{corollary}\label{c:sqrtm}
No latin square of order $n$ can have more than
$O(n^{\sqrt{2m}+2})$ subsquares of order $m$.
\end{corollary}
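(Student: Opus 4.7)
The plan is to deduce the corollary directly from Theorem~\ref{t:bttrbnd} by optimizing the choice of the parameter $t$. Since Theorem~\ref{t:bttrbnd} gives the bound $O(n^{\psi(m,t)+t})$ for every positive integer $t\le m$, and $\psi(m,t)$ behaves roughly like $m/(2t)$, I want to minimize $m/(2t)+t$, which is minimized at $t=\sqrt{m/2}$ and yields the exponent $2\sqrt{m/2}=\sqrt{2m}$. So I will take $t=\bigl\lceil\sqrt{m/2}\,\bigr\rceil$ (assuming $m$ is large enough that $t\le m$; for the finitely many remaining $m$ the bound is trivial because the implied constant can absorb them).

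With this choice, $t\le\sqrt{m/2}+1$ and $m/(2t)\le m/(2\sqrt{m/2})=\sqrt{m/2}$. In the case when $m$ is even, this gives
\[
\psi(m,t)+t=\bigl\lfloor m/(2t)\bigr\rfloor+1+t\le\sqrt{m/2}+1+\sqrt{m/2}+1=\sqrt{2m}+2.
\]
In the case when $m$ is odd, $\psi(m,t)=\bigl\lceil\tfrac12\lfloor m/t\rfloor\bigr\rceil\le\lceil m/(2t)\rceil\le m/(2t)+1\le\sqrt{m/2}+1$, so the same upper bound $\psi(m,t)+t\le\sqrt{2m}+2$ holds. Substituting into Theorem~\ref{t:bttrbnd} yields the desired $O(n^{\sqrt{2m}+2})$ bound.

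There is no real obstacle here; the only thing to watch is that the exponent produced by Theorem~\ref{t:bttrbnd} must be bounded uniformly for the chosen $t$, which is handled by the two short computations above. The rest is bookkeeping to confirm $t$ is a legitimate choice (positive integer with $t\le m$ for $m$ beyond some small threshold).
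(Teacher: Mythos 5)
Your proof is correct and follows essentially the same route as the paper: both choose $t=\bigl\lceil\sqrt{m/2}\,\bigr\rceil$ in Theorem~\ref{t:bttrbnd} and verify $\psi(m,t)+t\le\sqrt{2m}+2$ by the same case split on the parity of $m$. (Your worry about $t\le m$ is unnecessary, since $\lceil\sqrt{m/2}\,\rceil\le m$ for every positive integer $m$, but this does no harm.)
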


\begin{proof}
Taking $t=\big\lceil\sqrt{m/2}\,\big\rceil$, we will show that
$\psi(m,t)+t \leq \sqrt{2m}+2$.  If $m$ is even then
\begin{align*}
\psi(m,t)+t
&=\bigg\lfloor\frac{m}{2\lceil \sqrt{m/2}\,\rceil} \bigg\rfloor+1+\big\lceil\sqrt{m/2}\,\big\rceil\\
&\le \big\lfloor \sqrt{m/2} \big\rfloor+1+\sqrt{m/2}+1.
\end{align*}
Similarly, if $m$ is odd,
\[
\psi(m,t)+t
=\bigg\lceil \frac{1}{2} \bigg\lfloor \frac{m}{\lceil \sqrt{m/2}\,\rceil} \bigg\rfloor \bigg\rceil+\big\lceil\sqrt{m/2}\,\big\rceil
\le2\big\lceil\sqrt{m/2}\,\big\rceil.
\]
In either case, $\psi(m,t)+t\le2(\sqrt{m/2}+1)$, as required.
\end{proof}

The bound in \tref{t:subsqbnd} is obviously achieved when $m=n$ and is
known \cite{HW,vR} to be achieved for infinitely many $n$ when
$m\in\{2,3\}$.  It is also not hard to show that the elementary
abelian 2-groups achieve the bound in \tref{t:subsqbnd} for $m=4$.  On
the other hand, \cref{c:sqrtm} shows that the bound in
\tref{t:subsqbnd} can only be achieved for finitely many $n$ when
$m>9$. In fact, a more careful analysis using \tref{t:bttrbnd} reveals
that, for $m>4$, the number of subsquares of order $m$ is
$o(n^{1+h})$. Hence \tref{t:subsqbnd} is not
best possible except for $m\le4$.

See \cite{MW} for some results on how many subsquares are `typical'
in a random latin square. In that paper it is conjectured that
the proportion of latin squares of order $n$ with a subsquare of order
greater than 3 tends to zero as $n\rightarrow\infty$.

\section{Full products}

A \emph{quasigroup} is a groupoid $(Q,\cdot)$ such that the equations $ax=b$
and $ya=b$ have unique solutions $x$, $y\in Q$ for every $a$, $b\in Q$. A
\emph{loop} is a quasigroup with a neutral element.
Latin squares are precisely multiplication tables of finite quasigroups.
Multiplication tables of finite loops correspond to normalized latin squares.

For a subset $S$ of a loop $Q$, denote by $P(S)$ the set of all elements of $Q$
that are obtained as products of all elements of $S$ with each element of $S$
being used precisely once. We refer to elements of $P(Q)$ as \emph{full
products} of $Q$. Full products play an important role in a recent
non-associative interpretation for the Hall-Paige conjecture \cite{pula}.

For a loop $Q$, let $Q'$ be the \emph{derived subloop}, that is, the least
normal subloop $H$ of $Q$ such that $Q/H$ is an abelian group.

It is not difficult to show that $P(Q)$ is contained in a coset of
$Q'$. The D\'enes-Hermann Theorem \cite{DH} states that if $Q$ is a
group then $P(Q)$ is equal to a coset of $Q'$; more precisely, either
$P(Q)=Q'$ or $P(Q)=xQ'$ where $x^2\in Q'$.

It is not true for a general loop $Q$ that $P(Q)$ is a coset of $Q'$, but the
only known counterexamples are of order $5$. For instance, the loop
\begin{displaymath}
    \begin{array}{c|ccccc}
        Q_1&1&2&3&4&5\\
        \hline
        1&1&2&3&4&5\\
        2&2&1&5&3&4\\
        3&3&4&1&5&2\\
        4&4&5&2&1&3\\
        5&5&3&4&2&1
    \end{array}
\end{displaymath}
satisfies $P(Q_1)=\{2,3,4,5\}$.

Note that $|P(Q)|$ is not an isotopy invariant, since the loop
\begin{equation}\label{e:full5}
    \begin{array}{c|ccccc}
        Q_2&1&2&3&4&5\\
        \hline
        1&1&2&3&4&5\\
        2&2&1&4&5&3\\
        3&3&4&5&2&1\\
        4&4&5&1&3&2\\
        5&5&3&2&1&4
    \end{array}
\end{equation}
is isotopic to $Q_1$ but satisfies $P(Q_2)=Q_2$.

While we can certainly have $P(Q)<Q$ in an arbitrarily large loop (an abelian
group $Q$ will do), it is to be expected that a sufficiently large loop $Q$
chosen at random will satisfy $Q'=Q$ and, in fact, $P(Q)=Q$. However, we are
not aware of any argument that would show $P(Q)=Q$ for a `typical' sufficiently
large loop $Q$. As an application of our previous results we show here that for
every $n\ge 5$ there is a loop $Q$ of order $n$ satisfying $P(Q)=Q$.

\begin{lemma}\label{l:tripling}
Let $Q$ be a loop of order $m\ge 5$ such that $P(Q)=Q$. Then for every $3m-2\le
n\le 4m-3$ there is a loop $H$ of order $n$ satisfying $P(H)=H$.
\end{lemma}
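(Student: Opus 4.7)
The plan is to build $H$ as a loop of order $n$ containing two subloops $Q_1,Q_2\cong Q$ intersecting only in the identity $e$, and then exploit the hypothesis $P(Q)=Q$ on both copies via two different bracketings. Apply Corollary~\ref{c:subsquares} with $a=b=m$, $c=1$: the hypotheses $n-2m\ge m-2$ and $m\ge 5/2$ hold since $n\ge 3m-2$ and $m\ge 5$, yielding a latin square of order $n$ containing two order-$m$ subsquares that overlap in one cell. Running the Ryser-based completion from the proof of Theorem~\ref{t:overlapsubsq} starting from two pre-installed copies of $Q$'s multiplication table, glued at their identity cells, produces the desired loop $H$.

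For a target $h\in H$, partition the $n$ factors as $Q_1\sqcup(Q_2\setminus\{e\})\sqcup S$, where $S=H\setminus(Q_1\cup Q_2)$ (of size $n-2m+1\ge 1$ throughout the given range). Let $B_1,B_2,B_3$ be bracketed subproducts of these three sets. Because $e$ is absorbed in any product, one has $P(Q_2\setminus\{e\})=P(Q_2)=Q_2$; hence $B_1$ can be made to equal any $q_1\in Q_1$, $B_2$ any $q_2\in Q_2$, and $B_3$ some $t\in H$. The two bracketings
\[
(B_1\cdot B_2)\cdot B_3=(q_1q_2)\cdot t \qquad\text{and}\qquad (B_2\cdot B_1)\cdot B_3=(q_2q_1)\cdot t
\]
together realize every value in $(Q_1Q_2\cup Q_2Q_1)\cdot t$.

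The central claim is $Q_1Q_2\cup Q_2Q_1=H$. One has $Q_1\cup Q_2\subseteq Q_1Q_2\cap Q_2Q_1$ since $e$ is an identity. For elements of $S$, the Ryser argument in the proof of Theorem~\ref{t:overlapsubsq} forces each $s\in S$ to appear at least $4m-2-n\ge 1$ times in $X\cup Y$, the two $(m-1)\times(m-1)$ off-diagonal submatrices at rows $Q_i\setminus\{e\}$ and columns $Q_j\setminus\{e\}$. Entries of $X$ are products $q_1q_2$ with $q_i\in Q_i\setminus\{e\}$ and so lie in $Q_1Q_2$; symmetrically the entries of $Y$ lie in $Q_2Q_1$. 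Hence $S\subseteq Q_1Q_2\cup Q_2Q_1$, and since right-multiplication by $t$ is a bijection, $(Q_1Q_2\cup Q_2Q_1)\cdot t=H$. Choosing the bracketing and subproduct values appropriately exhibits $h$ as a full product.

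The main obstacle is the first step: verifying that the Ryser completion can be carried out starting from two \emph{pre-assigned} subsquares (the multiplication tables of $Q$) rather than abstract order-$m$ subsquares, while still preserving the loop property (i.e., that the shared cell is the identity, and that row/column labellings respect the two subloop structures). Because Ryser's theorem imposes only symbol-frequency conditions, the analysis of Theorem~\ref{t:overlapsubsq} should go through once one verifies that those counts are unchanged when the subsquare entries are prescribed; the bookkeeping needed to align the identities and then complete the loop is the technical heart of the proof.
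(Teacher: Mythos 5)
Your proposal is correct and follows essentially the same route as the paper's proof: invoke Corollary~\ref{c:subsquares} with $a=b=m$, $c=1$ to glue two copies of $Q$ at the identity, use Ryser's condition together with $n\le 4m-3$ to force every symbol of $S=H\setminus(Q_1\cup Q_2)$ to appear in $X\cup Y$ and hence lie in $Q_1Q_2\cup Q_2Q_1$, and then realize every element of $H$ as a full product by bracketing over the partition (the paper phrases this as $P(Q\cup\overline{Q})=H$ followed by the observation that multiplying by the remaining elements cannot decrease cardinality, which is equivalent to your fixed $t\in P(S)$ and bijectivity of right translation). The ``main obstacle'' you flag is likewise left implicit in the paper and is routine: Ryser's condition depends only on symbol frequencies, so the completion goes through with the prescribed Cayley tables in place, and the completed square is made into a loop with $Q_1,Q_2$ as subloops by reading the operation off row $e$ and column $e$.
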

\begin{proof}
Let $\overline{Q}$ be an isomorphic copy of $Q$. Set $a=b=m$ and $c=1$ in
\cref{c:subsquares} to see that for every $n\ge 3m-2$ there is a loop $H$ of
order $n$ that contains $Q$ and $\overline{Q}$ as subloops and such that
$Q\cap\overline{Q}=1$, the neutral element of $H$.

Assume further that $n\le 4m-3$ and let $x\in H\setminus (Q\cup \overline{Q})$.
By Ryser's condition, $x$ appears in $(Q\cup \overline{Q})\times (Q\cup
\overline{Q})$ at least $2(2m-1) - n \ge 1$ times, and hence either $x\in
Q\times \overline{Q}$ or $x\in \overline{Q}\times Q$.

We are going to show that $P(Q\cup \overline{Q}) = H$. This will imply that
$P(H)=H$ since the cardinality of $P(Q\cup \overline{Q})$ cannot decrease upon
multiplying if by the elements of $H\setminus(Q\cup\overline{Q})$ in any way.

Let $x\in H$. If $x\in Q$, we see that $x\in P(Q\cup \overline{Q})$ since
$P(Q)=Q$ and $1\in P(\overline{Q})$. Similarly, $x\in P(Q\cup\overline{Q})$ for
every $x\in \overline{Q}$. Suppose that $x\in H\setminus(Q\cup\overline{Q})$.
By the argument above, we know that either $x = y\overline{y}$ or
$x=\overline{y}y$ for some $y\in Q$, $\overline{y}\in \overline{Q}$. This means
that $x\in P(Q\cup\overline{Q})$, as $P(Q)=Q$ and
$P(\overline{Q})=\overline{Q}$.
\end{proof}

\begin{theorem} There is a loop $Q$ of order $n$ satisfying $P(Q)=Q$ if and
only if $n=1$ or $n\ge 5$.
\end{theorem}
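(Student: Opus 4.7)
The plan is to split the biconditional into its two directions.

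\textbf{Necessity.} The $n=1$ case is trivial. For $n\in\{2,3,4\}$, I would use the fact that the only loops of those orders are the abelian groups $\mathbb{Z}_2$, $\mathbb{Z}_3$, $\mathbb{Z}_4$, and $\mathbb{Z}_2\times\mathbb{Z}_2$ (the smallest non-associative loop has order $5$). In any abelian group $G$ with $|G|\ge 2$, commutativity and associativity collapse every full product to the single element $\sum_{g\in G}g$, so $|P(G)|=1<n$, ruling out $P(Q)=Q$.

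\textbf{Sufficiency.} The $n=1$ case is trivial and $n=5$ is witnessed by $Q_2$ of \eref{e:full5}. For $n\ge 6$ the strategy is to iterate \lref{l:tripling}. The key observation is that whenever loops with $P(Q)=Q$ exist for every order in an interval $[\ell,r]$ with $\ell\ge 5$, \lref{l:tripling} produces such loops for every order in
\[
\bigcup_{m=\ell}^{r}[3m-2,\,4m-3]=[3\ell-2,\,4r-3],
\]
the equality holding because consecutive intervals $[3m-2,4m-3]$ and $[3(m+1)-2,4(m+1)-3]=[3m+1,4m+1]$ overlap whenever $m\ge 3$. Thus, starting from a base range $[5,12]$ of orders, one iteration yields loops for every order in $[5,12]\cup[13,45]=[5,45]$; the next produces $[5,177]$, and the upper endpoint roughly quadruples at each step, so every $n\ge 5$ is eventually covered.

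It therefore suffices to supply base loops of orders $5,6,7,8,9,10,11,12$. The order $5$ case is $Q_2$, and I would handle the remaining seven orders by exhibiting explicit multiplication tables, most plausibly obtained by a small computer search. Producing (or at least verifying) these base loops is the main concrete obstacle; once they are in hand, the inductive bootstrap from \lref{l:tripling} is essentially mechanical.
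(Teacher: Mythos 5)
Your proposal is correct and takes essentially the same route as the paper: the authors likewise dispose of $2\le n\le 4$ by noting such loops are abelian groups (phrased via the derived subloop rather than your direct computation that $|P(G)|=1$), verify the orders $5\le n\le 12$ by exhibiting $Q_2$ and a computer search, and then cover all $n\ge 13$ by iterating \lref{l:tripling} over the overlapping intervals $[3m-2,4m-3]$. Your bookkeeping of how those intervals chain together is just a more explicit version of the paper's ``and so on.''
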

\begin{proof}
The statement is true for $n=1$. Assume that $1<n<5$. Then $Q$ is an
abelian group, $Q'<Q$, and thus $P(Q)$ is a proper subset of $Q'$. In
\eref{e:full5} we gave a loop $Q_2$ of order 5 where $Q_2=P(Q_2)$. It
is easy to check by computer that for every $5< n\le 12$ there is a
loop $Q$ of order $n$ satisfying $P(Q)=Q$.  (In fact we did not find
any example where $P(Q)<Q$, except those with $Q'<Q$.)  By
\lref{l:tripling}, the theorem is also true for all $n$ in the
intervals $[3\cdot 5-2,4\cdot 5-3] = [13,17]$, $[3\cdot 6-2,4\cdot 6-3] 
= [16,21]$, $[3\cdot 7-2,4\cdot 7-3] = [19,25]$, and so on,
obviously accounting for every $n\ge 5$.
\end{proof}

\section*{Acknowledgement}

The authors are grateful to Doug Stones for useful feedback
on a draft of this paper.

  \let\oldthebibliography=\thebibliography
  \let\endoldthebibliography=\endthebibliography
  \renewenvironment{thebibliography}[1]{%
    \begin{oldthebibliography}{#1}%
      \setlength{\parskip}{0.4ex plus 0.1ex minus 0.1ex}%
      \setlength{\itemsep}{0.4ex plus 0.1ex minus 0.1ex}%
  }%
  {%
    \end{oldthebibliography}%
  }

\end{document}